\documentclass[a4paper,11pt]{amsart}

\usepackage[utf8]{inputenc}
\usepackage{amsmath}
\usepackage{amssymb}
\usepackage{amsthm}
\usepackage{tikz}
\usepackage{tikz-cd}
\usepackage{verbatim}
\usepackage{hyperref}
\usetikzlibrary{matrix}

\DeclareMathOperator{\st}{st}

\newcommand{\F}{\mathbb{F}}
\newcommand{\N}{\mathbb{N}}
\newcommand{\e}{\mathbf{e}}

\newtheorem*{thmA}{Theorem A}
\newtheorem{lemma}{Lemma}
\newtheorem{proposition}[lemma]{Proposition}
\newtheorem{cor}[lemma]{Corollary}

\theoremstyle{definition}

\newtheorem*{definition}{Definition}

\title{Multi-GGS-groups have the congruence subgroup property}

\author[A.\ Garrido]{Alejandra Garrido}
\address{Mathematisches Institut\\
	Heinrich-Heine-Universit\"{a}t D\"{u}sseldorf\\
Universit\"{a}tsstr. 1\\
40225\\
D\"{u}sseldorf, Germany}
\email{alejandra.garrido@uni-duesseldorf.de}

\author[J.\ Uria-Albizuri]{Jone Uria-Albizuri}
\address{Basque Center of Applied Mathematics\\ Mazarredo, 14.\\ 48009 \\ Bilbao, Basque Country - Spain}
\email{juria@bcamath.org}

\date{}
\thanks{A.\ Garrido is supported by the Alexander von Humboldt Foundation.
	J.\ Uria-Albizuri acknowledges financial support from the Spanish Government, grants MTM2011-28229-C02 and
MTM2014-53810-C2-2-P, from the Basque Government, grants IT753-13 and IT974-16 and also from the Basque Goverment predoctoral grant PRE-2014-1-347. This research is also supported by the Basque Government through the BERC 2018-2021 program and by the Spanish Ministry of Economy and Competitiveness MINECO: BCAM Severo Ochoa accreditation SEV-2013-0323.
 }

\subjclass[2010]{Primary 20E08}

\begin{document}
\begin{abstract}
We generalize the result about the congruence subgroup property for GGS-groups in \cite{AGU} to the family of multi-GGS-groups; that is, all multi-GGS-groups except the one defined by the constant vector have the congruence subgroup property. 
Even if the result remains, new ideas are needed in order to generalize the proof.
\end{abstract}

\maketitle

\section{Introduction}

Groups acting on regular rooted trees have been studied since the 1980s as examples of groups with exotic properties. 
For instance, the Gupta--Sidki group (\cite{gupta-sidki:burnside}) is well known as a particularly uncomplicated answer to the General Burnside Problem. 
It acts on the ternary rooted tree and is two-generated by a rooted automorphism and a directed one (see \cite{AGU} for terminology and notation used here).
A similar example, acting on the 4-regular rooted tree, was introduced by Grigorchuk in \cite{grigorchuk:burnside} and is now sometimes known as the ``second Grigorchuk group". 
A generalization of these two examples is provided by the family of Grigorchuk--Gupta--Sidki (GGS) groups%
\footnote{The name was coined by Baumslag and is not to be confused with Generalized Golod--Shafarevich groups, which also provide negative answers to the General Burnside Problem, but are of a very different nature.}. 
Each group in  this family acts on the $p$-regular tree, where $p\geq 3$ (but here, as in most papers where they are studied, we assume $p$ is prime)  and is also generated by a rooted and a directed automorphism. 
The latter generator is defined according to a vector in $\F_p^{p-1}$. 


The classical congruence subgroup property for linear algebraic groups has a natural analogue for groups acting on regular rooted trees.
 In this case, the principal congruence subgroups are the level stabilizers, $\st_G(n)$ for each $n\in \N$.
 Thus the congruence subgroup property is satisfied if every subgroup of finite index contains a principal congruence subgroup. In other words, if the topology given by the finite index subgroups (the profinite topology) coincides with the topology given by the level stabilizers. 

In \cite{AKT}, a family further generalizing GGS-groups was defined and studied.
They were called multi-edge spinal groups, but we prefer the term multi-GGS-groups.
Each group in the family is defined like a GGS-group, except that there are more directed generators.
In \cite{AGU} we proved that a GGS-group has the congruence subgroup property if and only if it is not defined by the constant vector. 
In this note we show that the result is still true for the whole family of multi-GGS-groups.

\begin{thmA}\label{thm:CSP}
Let $G$ be a multi-GGS-group. Then $G$ is just infinite and has the congruence subgroup property if and only if $G$ is not the GGS-group defined by the constant vector.
\end{thmA}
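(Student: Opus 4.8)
The plan is to reproduce the argument of \cite{AGU} in the generality of a family of defining vectors, isolating the steps that genuinely use that there is a single such vector and replacing them. The ``only if'' direction needs no new work: by \cite{AGU} the GGS-group defined by the constant vector does not have the congruence subgroup property, so it cannot be both just infinite and have that property. We may therefore assume that $G$ is a multi-GGS-group whose defining vectors $\e_1,\dots,\e_r$ are linearly independent in $\F_p^{p-1}$ and do not reduce to a single copy of the constant vector; equivalently, either $r\geq 2$, or $r=1$ with $\e_1$ non-constant. We must prove that such a $G$ is just infinite and has the congruence subgroup property.

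From \cite{AKT} we take the structural input that $G$ is regular branch over a finite-index subgroup $K$, which may be taken to be $\gamma_3(G)$ in the generic case and $G'$ in the remaining (``symmetric'') cases; in particular $\psi(K)\supseteq K\times\overset{(p)}{\cdots}\times K$. Just infiniteness is then a consequence of this branch structure together with the first-level computation carried out below: a branch group is just infinite once $G/\rst_G(n)'$ is finite for all $n$, and here this follows by induction on $n$ from the case $n=1$.

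For the congruence subgroup property I would use the standard reduction, as in \cite{AGU} and going back to Grigorchuk. Since $G$ is just infinite and branch, every non-trivial normal subgroup of $G$ contains $\rst_G(n)'$ for some $n$; applying the regular branch structure repeatedly, it then suffices to establish a single first-level containment, essentially
\[
\gamma_3(G)\times\overset{(p)}{\cdots}\times\gamma_3(G)\ \leq\ \psi\bigl(\st_G(1)'\bigr),
\]
together with the elementary computation of the finite quotient $\st_G(1)/\psi^{-1}\bigl(\gamma_3(G)\times\cdots\times\gamma_3(G)\bigr)$. For a single non-constant vector this containment is exactly what is proved in \cite{AGU}; the content of the present note is that it persists for every admissible family.

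This last point is the heart of the matter, and I expect it to be the main obstacle. Writing $b_i=(a^{e_{i,1}},\dots,a^{e_{i,p-1}},b_i)$, one has to show that suitable commutators among the $b_i$ and their $a$-conjugates --- for instance $[b_i,\,a^{-j}b_ka^{j}]$ and conjugates thereof --- generate, modulo sufficiently deep level stabilizers, a subgroup of $\st_G(1)$ whose $\psi$-image already exhausts $\gamma_3(G)$ in one coordinate. The ``first-level shadow'' of these commutators is linear algebra over $\F_p$: it is governed by the $\F_p\langle a\rangle$-submodule $M$ generated by $\e_1,\dots,\e_r$ inside the natural $(p-1)$-dimensional module attached to the directed generators (essentially the augmentation ideal of $\F_p\langle a\rangle$, on which $a$ acts as a single Jordan block with eigenvalue one). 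Since $\F_p\langle a\rangle\cong\F_p[x]/(x-1)^p$ is a uniserial local ring, $M$ is controlled by a single integer, namely $\max_i\dim_{\F_p}\F_p\langle a\rangle\e_i$, and the constant vector is precisely the generator of the socle (the fixed line of the shift); our hypothesis says exactly that $M$ strictly contains this socle, i.e.\ $\dim_{\F_p}M\geq 2$. I would then: (i) record this dimension count; (ii) translate ``$M$ large enough'' into the required surjectivity of the commutator map onto $\gamma_3(G)$ in one coordinate --- this is where most of the care lies, and where one must reproduce the case division of \cite{AKT} (whether some $\e_i$ is symmetric, the extreme case $r=p-1$, and small primes); (iii) feed the resulting first-level containment back into the reduction above to obtain simultaneously that $G$ is just infinite and that it has the congruence subgroup property.
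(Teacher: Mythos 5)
Your treatment of the ``only if'' direction and of the regular branch structure is sound, but there is a genuine gap in the ``if'' direction: you never establish that the derived subgroup of the branching subgroup contains a level stabilizer, and no first-level computation can deliver this. The criterion you are implicitly invoking (Proposition 2.4 of \cite{AGU}) has two inputs: (a) $G$ is regular branch over some $K$, so that every finite-index normal subgroup contains $\psi_n^{-1}(K'\times\overset{p^n}{\dots}\times K')$ for some $n$; and (b) $K'\geq \st_G(m)$ for some $m$. Your proposal spends all its effort on (a) --- the containment $\gamma_3(G)\times\overset{p}{\dots}\times\gamma_3(G)\leq\psi(\st_G(1)')$, which is essentially Lemma \ref{regularbranch} here and is indeed where the case division on symmetric vectors lives --- and dismisses (b) as ``the elementary computation of the finite quotient $\st_G(1)/\psi^{-1}(\gamma_3(G)\times\overset{p}{\dots}\times\gamma_3(G))$''. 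But (b) is not a level-one statement: knowing that quotient exactly only lets you iterate $\gamma_3(G)\geq\st_G(1)'\geq\psi^{-1}(\gamma_3(G)\times\overset{p}{\dots}\times\gamma_3(G))\geq\cdots$, which produces a descending chain and never a level stabilizer. One must prove directly that $G'$ (and hence, via Lemma \ref{psi2}, $G''$) contains $\st_G(m)$ for some explicit $m$, and the level at which this is achieved depends on $r$, so it cannot be read off from data at the first level.

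This missing step is precisely the new content of the paper: Proposition \ref{stab} shows $G'\geq\st_G(r+1)$ by proving $d\bigl(G/G'\st_G(n)\bigr)\geq n$ for $n=2,\dots,r+1$ by induction, using the Burnside Basis Theorem and the linear independence of \emph{all} $r$ defining vectors; combined with $\psi_2(G'')\geq G'\times\overset{p^2}{\dots}\times G'$ this yields $G''\geq\st_G(r+3)$ and closes the argument. Your module-theoretic observation --- that the $\F_p\langle a\rangle$-submodule generated by $\e_1,\dots,\e_r$ strictly contains the socle --- is a correct way to see the branch structure, but it only uses $\dim_{\F_p}M\geq 2$ and therefore cannot detect the depth $r+1$; the independence of the full family enters nowhere else in your plan. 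Without an argument of the kind in Proposition \ref{stab}, the proof does not go through.
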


\noindent\textbf{Acknowledgements.} We are grateful to G.A.\ Fern\'andez-Alcober for useful comments and to the anonymous referee for a prompt report including improvement suggestions.

\section{Multi-GGS-groups}

\begin{definition}
A \emph{multi-GGS-group} $G$ is a group of automorphisms of the $p$-regular rooted tree, where $p$ is an odd prime. 
The group $G$ is generated by the rooted automorphism $a$ which acts on the first level of the tree by the cyclic permutation $(1\,\dots\, p)$
and by some finite number $r$ of directed automorphisms $b_1,\dots,b_r \in \st_G(1)$. 
Each $b_i$ is defined by a vector $\e_i=(e_{i,1},\dots,e_{i,p-1})\in \F_p^{p-1}$:
$$\psi(b_i)=(a^{e_{i,1}},\dots,a^{e_{i,p-1}},b_i)$$
and we require that the defining vectors $\e_1,\dots,\e_r$ be linearly independent. 

\end{definition}

 We denote by $\mathcal{G}$ the group generated by $a$ and $ b_1$ with $b_1$ given by a constant vector.
 (Note that all constant vectors yield the same group.)
 
Let us first mention some properties about multi-GGS-groups that will be useful in the proof of the main theorem. 
The first lemma is a collection of results from \cite{AKT}.

\begin{lemma}\label{multi-lemma}
Let $G=\langle a,b_1,\dots,b_r\rangle$ be a multi-GGS-group with defining vectors $\e_1,\dots\e_r\in\F_p^{p-1}$.
\begin{itemize}
\item[(i)] We may assume that $e_{i,1}=1$ for all $\e_i$ with $i=1,\dots,r$.\\
\item[(ii)] If $G$ is not $\mathcal{G}$ then $$\psi(\gamma_3(\st_G(1)))=\gamma_3(G)\times\overset{p}{\dots}\times\gamma_3(G).$$
\item[(iii)] $G/G'\cong C_p^{r+1}.$
\end{itemize}
\end{lemma}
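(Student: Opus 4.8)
The plan is to prove the three parts in turn, with essentially all of the work concentrated in (ii). Two preliminary observations will be used throughout. First, the directed generators pairwise commute and have order $p$: both $\psi(b_i^p)$ and $\psi([b_i,b_j])$ have the form $(1,\dots,1,\ast)$ with $\ast$ equal to $b_i^p$, respectively $[b_i,b_j]$, and an easy induction then shows these elements lie in $\st_G(n)$ for every $n$, hence are trivial since $G$ acts faithfully; consequently $B:=\langle b_1,\dots,b_r\rangle\cong C_p^r$, with $b_i\mapsto\e_i$ extending to an isomorphism onto $\langle\e_1,\dots,\e_r\rangle$. Second, $\psi$ restricts to an isomorphism of $\st_G(1)$ onto $L:=\psi(\st_G(1))\le G\times\overset{p}{\dots}\times G$, and $L$ is invariant under the cyclic coordinate permutation $\tau$, because conjugation by $a$ normalises $\st_G(1)$ and induces $\tau$ under $\psi$. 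For (i) I would conjugate $G$ by the tree automorphism $\delta_\lambda$ (for a suitable $\lambda\in\F_p^\times$) that applies the permutation $x\mapsto\lambda x$ of $\{1,\dots,p\}$ at every vertex: then $\langle\delta_\lambda a\delta_\lambda^{-1}\rangle=\langle a^\lambda\rangle=\langle a\rangle$, while $\delta_\lambda b_i\delta_\lambda^{-1}$ is again directed, its defining vector obtained from $\e_i$ by permuting the entries and scaling by $\lambda$. Since $\langle\e_1,\dots,\e_r\rangle\ne 0$, for a suitable $\lambda$ this span is not contained in $\{e_{\cdot,1}=0\}$; replacing $b_1,\dots,b_r$ by a basis of $B$ whose vectors all have first entry $1$ then yields (i), as every step replaces $G$ by an isomorphic multi-GGS-group.

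For (ii), the inclusion $\psi(\gamma_3(\st_G(1)))\subseteq\gamma_3(G)\times\overset{p}{\dots}\times\gamma_3(G)$ is formal: $\psi(\gamma_3(\st_G(1)))=\gamma_3(L)\le\gamma_3(G\times\overset{p}{\dots}\times G)$. For the reverse, note that (i) makes $L$ \emph{subdirect}: the first coordinate of $\psi(b_i)$ is $a^{e_{i,1}}=a$, and $\tau$-shifts spread $a$ and all the $b_i$ over every coordinate, so each projection $L\to G$ is onto. Since $\gamma_3(L)\trianglelefteq L$ and $\tau(\gamma_3(L))=\gamma_3(L)$, it suffices to show that $\gamma_3(L)$ contains $(k,1,\dots,1)$ for $k$ running over a set of normal generators of $\gamma_3(G)$, say $k=[[a,b_i],a]$ and $k=[[a,b_i],b_j]$: conjugating such an element by members of $L$ (using subdirectness) produces $(k^g,1,\dots,1)$ for all $g\in G$, and applying $\tau$ carries it to every coordinate, giving $\gamma_3(G)\times\overset{p}{\dots}\times\gamma_3(G)\subseteq\gamma_3(L)$ and hence (ii). Such single-coordinate elements are built from $\psi$ of commutators of the $b_i$ with their $a$-conjugates; for example $\psi([b_i,a^{-1}b_ja])=(1,\dots,1,[a^{e_{i,p-1}},b_j],[b_i,a])$, and these basic commutators and their $\tau$-shifts have at most two nontrivial coordinates, so forming further commutators of them with elements of $L$ cancels one coordinate while leaving a prescribed element of $\gamma_3(G)$ in the other. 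This cancellation is the main obstacle, and it is precisely where $G\ne\mathcal G$ is needed: the hypothesis means $r\ge 2$, or $r=1$ with a non-constant defining vector, which in either case leaves enough room to separate the two nontrivial coordinates, whereas for $\mathcal G$ one has $\psi(b)=(a,\dots,a,b)$, too symmetric for this. I would carry out these manipulations following the corresponding computations in \cite{AKT}.

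Finally, for (iii), $G/G'$ is generated by $\overline a,\overline b_1,\dots,\overline b_r$, each of order dividing $p$, so $|G/G'|\le p^{r+1}$. For the reverse inequality it suffices that $B\cap G'=1$: then $BG'/G'\cong C_p^r$, and since $a\notin\st_G(1)\supseteq BG'$ the image $\overline a$ is independent of $\overline B$, giving $|G/G'|\ge p\cdot p^r$. The weaker statement $B\cap\st_G(2)=1$ is immediate, since commutativity of the $b_i$ gives $\psi(\prod_i b_i^{m_i})=(a^{\sum_i m_ie_{i,1}},\dots,a^{\sum_i m_ie_{i,p-1}},\prod_i b_i^{m_i})$, which lies in $\st_G(2)$ only if $\sum_i m_i\e_i=0$, forcing all $m_i=0$ by linear independence. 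The remaining point, passing from $\st_G(2)$ to $G'$, is the delicate step: combining the subdirectness of $L$ with the explicit form of $\psi$ on the $[a,b_i]$ one shows that $\prod_i b_i^{m_i}\in G'$ forces, level by level, a system of linear conditions on the $m_i$ incompatible with the linear independence of the $\e_i$ unless all $m_i$ vanish — the computation of \cite{AKT}.
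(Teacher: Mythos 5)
First, note that the paper does not prove this lemma at all: it is introduced as ``a collection of results from \cite{AKT}'' and cited, so there is no internal proof to compare against. Judged on its own terms, your proposal is sound in its scaffolding but has genuine gaps at exactly the points where the content of the lemma lies. Part (i) is essentially complete (the $\delta_\lambda$-conjugation plus a change of basis of $\langle \e_1,\dots,\e_r\rangle$ is the standard argument from \cite{Alc}), and your preliminary observations ($[b_i,b_j]=1$, $b_i^p=1$, subdirectness of $L$) are correct, as is the reduction of (ii) to producing $(k,1,\dots,1)\in\gamma_3(L)$ for normal generators $k$ of $\gamma_3(G)$, and the reduction of (iii) to $B\cap G'=1$.

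The gaps: in (ii), the step you describe as ``forming further commutators\dots cancels one coordinate while leaving a prescribed element of $\gamma_3(G)$ in the other'' is not justified and does not happen automatically. For instance $\psi([[b_i,b_j^a],b_k^a])=([[a,b_j],b_k],1,\dots,1,[[b_i,a^{e_{j,p-1}}],a^{e_{k,p-1}}])$ still has a generically nontrivial last coordinate, and killing it requires a case analysis on the defining vectors (arranging certain entries to vanish by row reduction, treating symmetric and non-symmetric vectors separately) of exactly the kind the paper carries out in its Lemma 2 for the analogous statement about $\st_G(1)'$; this is also precisely where the hypothesis $G\neq\mathcal{G}$ must visibly enter, and your proposal only gestures at why $\mathcal{G}$ is ``too symmetric''. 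In (iii), the step $B\cap G'=1$ is the whole theorem and cannot be waved through: the obvious invariant (recording the $a$-exponents of the first-level sections) maps $G'$ onto a codimension-one submodule of the image of $\st_G(1)$, so for $r\geq 2$ (and even for $r=1$ when $\sum_j e_{1,j}=0$, e.g.\ the Gupta--Sidki group) a dimension count shows this invariant alone cannot separate $B$ from $G'$; a deeper analysis is required. Since both of these steps are deferred to ``the computation of \cite{AKT}'', your text is in effect a citation of \cite{AKT} dressed as a proof — which matches what the paper does, but does not constitute an independent proof of the lemma.
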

 
 \begin{lemma}\label{regularbranch}
 If the multi-GGS-group $G$ is generated by $r\geq 2$ directed generators, then  $\psi(\st_G(1)')= G'\times \overset{p}{\dots}\times G'$.
  In particular, $G$ is regular branch over its commutator subgroup $G'$.  
 \end{lemma}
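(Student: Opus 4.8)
The plan is to establish the equality \(\psi(\st_G(1)')=G'\times\overset{p}{\dots}\times G'\); the ``in particular'' then follows at once, since \(G'\le\st_G(1)\) (because \(G/\st_G(1)\cong C_p\) is abelian) has finite index in \(G\) by Lemma~\ref{multi-lemma}(iii), while \(\psi(G')\supseteq\psi(\st_G(1)')=G'\times\overset{p}{\dots}\times G'\). The inclusion \(\psi(\st_G(1)')\subseteq G'\times\overset{p}{\dots}\times G'\) is clear, as \(\psi(\st_G(1))\le G\times\overset{p}{\dots}\times G\). For the reverse inclusion, put \(K=\psi(\st_G(1)')\). Since \(r\ge 2\) we have \(G\neq\mathcal G\), so Lemma~\ref{multi-lemma}(ii) applies; as \(\gamma_3(\st_G(1))\le\st_G(1)'\) it gives \(\gamma_3(G)\times\overset{p}{\dots}\times\gamma_3(G)=\psi(\gamma_3(\st_G(1)))\subseteq K\). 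Hence it is enough to show that \(K\) surjects onto \(\bigl(G'/\gamma_3(G)\bigr)\times\overset{p}{\dots}\times\bigl(G'/\gamma_3(G)\bigr)\). Write \(V=G'/\gamma_3(G)\), an \(\F_p\)-vector space on which \(G\) acts trivially, write \(\iota_m\) for the inclusion of \(V\) into the \(m\)-th factor of \(V^p\), and let \(\bar K\le V^p\) be the image of \(K\); the goal becomes \(\bar K=V^p\).

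Since \([G:\st_G(1)]=p\) with Schreier transversal \(1,a,\dots,a^{p-1}\) and \(a^p=1\), Reidemeister--Schreier gives \(\st_G(1)=\langle b_i^{a^k}\mid 1\le i\le r,\ 0\le k\le p-1\rangle\), so \(\st_G(1)'\) is generated as a subgroup by the \(\st_G(1)\)-conjugates of the commutators \([b_i^{a^k},b_j^{a^l}]\). Using that conjugation by \(a\) cyclically shifts the coordinates of \(\psi\), and that powers of \(a\) commute, one computes that \(\psi([b_i^{a^k},b_j^{a^l}])\) is supported on at most two coordinates: for \(k=l\) it is \([b_i,b_j]\) in a single coordinate, while for \(k\neq l\) it is \([b_i,a^{e_{j,l-k}}]\) in one coordinate and \([a^{e_{i,k-l}},b_j]\) in another (subscripts of the \(\e\)'s read modulo \(p\)). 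As \(G\) acts trivially on \(V\), conjugation is invisible in \(V^p\), so \(\bar K\) is spanned by the images of these elements; moreover \(\bar K\) is a module over the group ring \(R=\F_p[\langle a\rangle]\cong\F_p[s]/(s^p)\), with \(s\) the coordinate shift minus \(1\). The case \(k=l\), letting \(k\) range, yields \(\iota_m([b_i,b_j])\in\bar K\) for all \(m\) and all \(i,j\); equivalently \(W\otimes R\subseteq\bar K\), where \(W\le V\) is spanned by the classes of the \([b_i,b_j]\). Reducing the case \(k\neq l\) modulo \(\gamma_3(G)\), using \([x,a^c]\equiv c\,[x,a]\) and \(e_{i,1}=1\) (Lemma~\ref{multi-lemma}(i)), and writing \(\alpha_i\) for the class of \([a,b_i]\) in \(V\), one obtains
\[
e_{i,d}\,\iota_m(\alpha_j)-e_{j,-d}\,\iota_{m-d}(\alpha_i)\in\bar K\qquad(1\le i,j\le r,\ 1\le d\le p-1,\ m\in\mathbb{Z}/p).
\]

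It remains to deduce \(\bar K=V^p\) from these relations. Pass to \(\bar V=V/W\), which is spanned by the images \(\bar\alpha_i\) since \(V\) is spanned by the \(\alpha_i\) together with the classes of the \([b_i,b_j]\). Because \(\bar K\) is an \(R\)-submodule containing \(W\otimes R\), it suffices to show that \(\bar K\) surjects onto \(\bar V\otimes R/sR\cong\bar V\): then \(\bar V\otimes R=\bar K+s(\bar V\otimes R)=\bar K+s^2(\bar V\otimes R)=\dots=\bar K+s^p(\bar V\otimes R)=\bar K\), so \(\bar K=\bar V\otimes R\), whence \(\bar K=V^p\). The image in \(\bar V\otimes R/sR=\bar V\) of the displayed relation is \(e_{i,d}\bar\alpha_j-e_{j,-d}\bar\alpha_i\), so I must check that these span \(\bar V\). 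Fix \(i\). If \(\e_i\) is not a palindrome, the relations with \(j=i\) reduce to \((e_{i,d}-e_{i,-d})\bar\alpha_i\), which is nonzero for some \(d\), giving \(\bar\alpha_i\). If \(\e_i\) is a palindrome, pick \(j\neq i\) (possible since \(r\ge 2\)); the coefficient matrix of the relations \(e_{i,d}\bar\alpha_j-e_{j,-d}\bar\alpha_i\) (\(d=1,\dots,p-1\)) has columns \(\e_i\) and the reversal of \(\e_j\), and since \(\e_i\) equals its own reversal the linear independence of \(\e_i,\e_j\) is equivalent to that of \(\e_i\) and the reversal of \(\e_j\); hence the relations span a \(2\)-dimensional space and in particular contain \(\bar\alpha_i\). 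Thus all \(\bar\alpha_i\) lie in the span, \(\bar K\) surjects onto \(\bar V\), and the proof is complete.

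The step I expect to be the main obstacle is the last one, and within it the palindromic case: there the naive ``mod \(\gamma_3\)'' argument misses exactly one dimension in each coordinate, and one must genuinely invoke both the hypothesis \(r\ge 2\) and the linear independence of the defining vectors to close the gap. This is precisely the point at which the GGS-group argument of \cite{AGU} requires new input.
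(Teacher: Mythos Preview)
Your proof is correct and takes a genuinely different route from the paper's. Both arguments begin by invoking Lemma~\ref{multi-lemma}(ii) to reduce to working modulo $\gamma_3(G)\times\overset{p}{\dots}\times\gamma_3(G)$, but from there the paths diverge. The paper proceeds by explicit commutator computations, splitting into two cases according to whether some defining vector is non-symmetric (where an identity from \cite{Alc} produces $([a,b_1]^{1-m},1,\dots,1)$ directly) or all are symmetric (where a preliminary change of basis among the $\e_i$ is performed so that certain entries vanish, after which single commutators $[b_1,b_i^a]$ and $[b_1^{a^j},b_2]$ already have the desired shape). You instead package $V^p=(G'/\gamma_3(G))^p$ as a module over the local ring $R=\F_p[\langle a\rangle]\cong\F_p[s]/(s^p)$ and apply Nakayama's lemma, reducing the whole question to showing that the elements $e_{i,d}\bar\alpha_j-e_{j,-d}\bar\alpha_i$ span $\bar V$; this is then a short linear-algebra check in which the palindromic case uses that reversal fixes $\e_i$, so linear independence of $\e_i,\e_j$ transfers to $\e_i,\text{rev}(\e_j)$. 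Your approach is more conceptual and treats the symmetric and non-symmetric cases uniformly, making transparent exactly where the hypotheses $r\ge2$ and linear independence enter; the paper's approach is more elementary and self-contained, avoiding any module theory at the cost of a case split and an ad hoc basis change.
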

\begin{proof}
 Since $\psi(\st_G(1))\leq G\times\overset{p}{\dots}\times G$, we need only show the `$\geq$' inclusion in the statement.

Suppose that $b_1$ has non-symmetric defining vector $(e_{1,1},\ldots, e_{1,p-1})$; that is, $e_{1,i} \ne e_{1,p-i}$ for some $i$. Either $e_{1,i} \ne 0 $ or $e_{1,p-i} \ne 0$. Without loss of generality, suppose $e_{1,i} \ne 0$. As in the proof of Lemma \ref{multi-lemma} (i), we may assume that $i = 1$ and that $e_{1,1} = 1$ and $e_{1,p-1}=m\ne 1$.

 By the same argument as in \cite[Lemma 3.4]{Alc}, 
 $$\psi([b_1,b_1^a][b_1^{a^{-1}},b_1]^m\dots [b_1^a, b_1^{a^2}]^{m^{p-1}})\equiv ([a, b_1]^{1-m},1,\dots,1)$$
 where the congruence is modulo $\gamma_3(G)\times \overset{p}{\dots}\times \gamma_3(G).$
 Part (ii) of Lemma \ref{multi-lemma} implies that $([a, b_1]^{1-m},1,\dots,1)\in \psi(\st_G(1)')$ and therefore $([a, b_1],1,\dots,1)\in \psi(\st_G(1)')$ because $m\neq 1$.
  By a general fact about commutators, 
  $$[a^n, b_1]= [a, b_1]^{a^{n-1}}[a,b_1]^{a^{n-2}}\dots [a,b_1]^a[a,b_1]$$ for any $n\in\mathbb{Z}$.

 From this and the fact that $\psi(\st_G(1))\leq G\times \overset{p}{\dots}\times G$ is  a subdirect embedding, together with the fact  $\st_G(1)'\trianglelefteq\st_G(1)$, 
 we obtain 
 $([a^n,b_1],1,\dots,1) \in \psi(\st_G(1)')$ for any $n$. 
 Moreover, since $\st_G(1)'\trianglelefteq G$, we may conjugate the above element by a suitable power of $a$ to conclude that $(1, \dots,1, [a^n,b_1])\in \psi(\st_G(1)')$ for any $n$. 
 
 For any other directed generator $b_i$,
 $$\psi([b_1, b_i^a])=([a,b_i],1,\dots,1, [b_1,a^{e_{i,p-1}}]). $$
 Therefore, by the above, $([a,b_i],1,\dots,1)\in\psi(\st_G(1)')$. 
 Thus $(x, 1,\dots,1)\in \psi(\st_G(1)')$ for each normal generator $x$ of $G'$. 
 Once again the subdirect embedding $\psi(\st_G(1))\leq G\times \overset{p}{\dots}\times G$ allows us to conclude that 
$1\times \overset{p-1}{\dots}\times 1\times G' \leq \psi(\st_G(1)')$
 and since $G$ acts transitively on the first level of the tree we also have that $G'\times \overset{p}{\dots} \times G'\leq \psi(\st_G(1)')$. 
 
 Now suppose that all $b_i$'s are defined by symmetric vectors ($e_{i,j}=e_{i,p-j}$ for every $i,j\in \{1,\dots,p-1\}$).
  By (i) in Lemma \ref{multi-lemma} we may assume that $e_{i,1}=1$ for $i=1,\dots,r$. 
  Replacing each $\e_i$ by $\e_i-\e_1$ for $i=2,\dots,r$ we obtain the same group. 
  Thus we have $\e_i=(0,*,\ldots,*,0)$ for $i=2,\dots,r$.
  Let $e_{2,j}$ be  the first non trivial entry in $\e_2$. 
  Then there exists $k\in \F_p$ such that $e_{1,j}-ke_{2,j}=0$, so we can also replace $\e_1$ by $\e_1-k\e_2\neq 0$,  so that $e_{1,j}=0$.
  We thus obtain that
 $$\psi([b_1,b_i^a])=([a,b_i],1,\ldots,1),$$
 for $i=2,\ldots,r$, and
 \begin{align*}
 \psi([b_1^{a^j},b_2]) &=  (1,\dots,1,[b_1,a^{e_{2,j}}],1,\dots,1,[a^{e_{1,p-j}},b_2]) \\
& =(1,\dots,1,[b_1,a^{e_{2,j}}],1,\dots,1,1),
 \end{align*}
 where the last equality follows because $e_{1,p-j}=e_{1,j}=0$.
 Repeating the same argument as in the previous case we obtain the result.
\end{proof}

 Because of the above lemma (and Proposition 2.4 of \cite{AGU}),
 in order to show that a multi-GGS-group $G$ as in the lemma has the congruence subgroup property,  
  it suffices to show that $G''$ contains some level stabilizer. 
  This will be shown in Corollary \ref{cor}.

 \begin{lemma}\label{stabgamma}
  Let $G$ be any multi-GGS-group. Then $\st_G(1)'\leq \gamma_3(G)$.
 \end{lemma}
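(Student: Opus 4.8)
The plan is to establish the equivalent statement that $\st_G(1)/\gamma_3(G)$ is abelian; this quotient makes sense because $\gamma_3(G)\leq G'\leq\st_G(1)$, and $\st_G(1)/\gamma_3(G)$ being abelian is precisely the assertion $\st_G(1)'\leq\gamma_3(G)$.

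I would begin by recalling that $\st_G(1)$ is generated by the conjugates $b_i^{a^j}$ with $1\leq i\leq r$ and $0\leq j\leq p-1$: indeed $G=\langle a\rangle\,\st_G(1)$ and $G/\st_G(1)$ is cyclic of order $p$, generated by the image of $a$, so the Reidemeister--Schreier procedure applied to the transversal $\{1,a,\dots,a^{p-1}\}$ (using $a^p=1$) yields exactly these generators. Next I pass to $\overline{G}:=G/\gamma_3(G)$, which is nilpotent of class at most $2$, so $\overline{G'}=G'/\gamma_3(G)$ is central in $\overline{G}$. From $b_i^{a^j}=b_i\,[b_i,a^j]$ with $[b_i,a^j]\in G'$ we see that each $\overline{b_i^{a^j}}$ equals $\overline{b_i}$ times a central element of $\overline{G}$; hence $\overline{\st_G(1)}$ is generated by $\overline{b_1},\dots,\overline{b_r}$ together with central elements, and it will be abelian as soon as $\overline{b_1},\dots,\overline{b_r}$ commute pairwise.

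For this last point I would in fact show that $b_i$ and $b_k$ commute already in $G$. All of the directed generators have $b_\ell$ in the $p$-th coordinate of $\psi(b_\ell)$, and powers of $a$ commute, so
$$\psi([b_i,b_k])=\bigl([a^{e_{i,1}},a^{e_{k,1}}],\dots,[a^{e_{i,p-1}},a^{e_{k,p-1}}],[b_i,b_k]\bigr)=(1,\dots,1,[b_i,b_k]).$$
Now if $c\in\st_G(1)$ satisfies $\psi(c)=(1,\dots,1,c)$ then $c=1$: the only possibly non-trivial section of $c$ at the first level is $c$ itself, so $c\in\st_G(n+1)$ whenever $c\in\st_G(n)$, and induction on $n$ together with $\bigcap_n\st_G(n)=1$ forces $c=1$. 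Applying this to $c=[b_i,b_k]$ gives $[b_i,b_k]=1$, so $\overline{\st_G(1)}$ is abelian and $\st_G(1)'\leq\gamma_3(G)$, as desired. (Alternatively one may simply invoke the known fact that the directed generators of a multi-GGS-group have order $p$ and generate an elementary abelian subgroup $C_p^r$, cf.\ \cite{AKT}, which rests on the same self-similarity argument.) I do not anticipate a genuine obstacle here: the single idea is the elementary self-similarity observation of this last paragraph, and the rest is bookkeeping; the only mild point of care is that $\st_G(1)'$ is the normal closure in $\st_G(1)$ of the commutators $[b_i^{a^j},b_k^{a^l}]$, which causes no trouble since $\gamma_3(G)\trianglelefteq G$ and already contains each of these.
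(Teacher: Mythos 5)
Your proof is correct and follows essentially the same route as the paper: both arguments reduce to the observation that modulo $\gamma_3(G)$ each conjugate $b_i^{a^j}=b_i[b_i,a^j]$ differs from $b_i$ by a central element, so the commutators generating $\st_G(1)'$ collapse to $[b_i,b_k]=1$. The only difference is that you explicitly verify $[b_i,b_k]=1$ via the self-similarity/section argument, a standard fact the paper simply asserts.
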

\begin{proof}
 Since $\st_G(1)$ is normally generated by $b_1,\dots,b_r$ (equivalently, generated by the conjugates of $b_1,\dots,b_r$ by powers of $a$), 
 we have that $\st_G(1)'$ is normally generated by commutators of the form $[b_i^{a^m}, b_j^{a^n}]$ with $i, j\in \{1,\dots,r\}$ and $m,n \in \F_p$. 
 Now notice that $[b_i^{a^m}, b_j^{a^n}]=[b_i[b_i,a^m], b_j[b_j,a^n]]$ which is congruent modulo $\gamma_3(G)$ to $[b_i, b_j]=1$. 
 Thus all normal generators of $\st_G(1)'$ are contained in $\gamma_3(G)\trianglelefteq G$, which proves our claim. 
\end{proof}

 \begin{lemma}\label{subdirect}
Let $G\neq \mathcal{G}$ be a multi-GGS-group. 
 Then
 $$\psi(G')\leq_s G\times\overset{p}{\dots}\times G.$$
\end{lemma}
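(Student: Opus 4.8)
The plan is to show that each of the $p$ coordinate projections of $\psi(G')$ is onto $G$. Since $G/\st_G(1)$ is cyclic of order $p$, hence abelian, we have $G'\le\st_G(1)$, so $\psi(G')\le G\times\overset{p}{\dots}\times G$; writing $\pi_m$ for the $m$-th coordinate projection, each $\pi_m$ is a homomorphism, so $\pi_m(\psi(G'))$ is a subgroup of $G$. Because $G'\trianglelefteq G$ and $G$ acts transitively on the first level of the tree through the powers of $a$, conjugation by a suitable power of $a$ carries $G'$ onto itself while cyclically permuting the $p$ coordinates of $\psi$; hence $\pi_m(\psi(G'))=\pi_1(\psi(G'))$ for every $m$, and it suffices to prove that $\pi_1(\psi(G'))=G$.

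The main computation is to determine $\psi([a,b_i])$ for each directed generator $b_i$. From $\psi(b_i)=(a^{e_{i,1}},\dots,a^{e_{i,p-1}},b_i)$ and the fact that conjugation by $a$ cyclically shifts the $\psi$-coordinates of an element of $\st_G(1)$, a routine calculation of the same kind as those in \cite{AKT,Alc} shows that the $p$ coordinates of $\psi([a,b_i])$ are, in some order, the powers $a^{e_{i,j}-e_{i,j+1}}$ for $j=1,\dots,p-2$, together with $b_i^{-1}a^{e_{i,p-1}}$ and $a^{-e_{i,1}}b_i$. Since $[a,b_i]\in G'$ and $G'\trianglelefteq G$, each conjugate $[a,b_i]^{a^{m}}$ ($m=0,\dots,p-1$) also lies in $G'$, and the first $\psi$-coordinates of these conjugates run over all $p$ of the elements just listed. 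Hence, for every $i=1,\dots,r$, the subgroup $\pi_1(\psi(G'))$ contains the powers $a^{e_{i,j}-e_{i,j+1}}$ (for $1\le j\le p-2$) and the element $a^{-e_{i,1}}b_i$.

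Finally I would use the hypothesis $G\neq\mathcal G$. If all the defining vectors $\e_i$ were constant, they would be scalar multiples of $(1,\dots,1)$ and would therefore span a one-dimensional subspace of $\F_p^{p-1}$; since $\e_1,\dots,\e_r$ are linearly independent, this would force $r=1$, making $G$ the GGS-group defined by a constant vector, that is, $G=\mathcal G$ --- a contradiction. So some $\e_{i_0}$ is non-constant, and then $e_{i_0,j}\neq e_{i_0,j+1}$ for some $j\in\{1,\dots,p-2\}$; as $p$ is prime, the nontrivial power $a^{e_{i_0,j}-e_{i_0,j+1}}$ generates $\langle a\rangle$, so $a\in\pi_1(\psi(G'))$. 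Consequently $b_i=a^{e_{i,1}}\cdot\bigl(a^{-e_{i,1}}b_i\bigr)\in\pi_1(\psi(G'))$ for each $i$, so $\pi_1(\psi(G'))$ contains $a,b_1,\dots,b_r$ and hence equals $G$. By the reduction in the first paragraph, $\psi(G')\leq_s G\times\overset{p}{\dots}\times G$.

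The only point requiring care is the explicit computation of $\psi([a,b_i])$ --- essentially the bookkeeping of the exponents of $a$ and of the direction of the cyclic shift induced by conjugation by $a$ --- and this presents no new difficulty. When $r\ge 2$ one could shorten the argument a little by invoking Lemma~\ref{regularbranch} to obtain $G'\le\pi_1(\psi(\st_G(1)'))\le\pi_1(\psi(G'))$ at once, and when $r=1$ the statement is the analogue for GGS-groups established in \cite{AGU}; but the uniform argument above treats all multi-GGS-groups $G\neq\mathcal G$ simultaneously, which seems preferable.
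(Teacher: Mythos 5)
Your proof is correct and follows essentially the same route as the paper's: compute the coordinates of $\psi([b_i,a])$, conjugate by powers of $a$ to move each coordinate to the first position, use the existence of a non-constant defining vector (which you rightly note is exactly what $G\neq\mathcal G$ guarantees) to obtain $a$ in the first coordinate, and then recover each $b_i$. The only discrepancies are cosmetic bookkeeping (your coordinate list is that of $[b_i,a]$ rather than $[a,b_i]$, which changes nothing), so there is nothing further to add.
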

\begin{proof}

Let $G= \langle a,b_1,\ldots,b_r\rangle$ a multi-GGS group and without loss of generality, we may assume that all defining vectors are non-constant. Let $G_i = \langle a, b_i \rangle$ be the associated GGS-group for each $1 \le i \le r$. From \cite[Lemma 2.5]{AGU}, we have $\psi(G_i')\le_s G_i \times\ldots\times G_i$ for $1 \le i \le r$. Hence $\psi (G') \le_s G \times\ldots\times G$, as $G$ is generated by $a, b_1, \ldots, b_r$.

\end{proof}

\begin{lemma}\label{psi2}
Let $G=\langle a,b_1,\dots,b_r\rangle$ be a multi-GGS-group with $r\geq 2$.
 Then $$\psi_2(G'')\geq G'\times\overset{p^2}{\dots}\times G'.$$
\end{lemma}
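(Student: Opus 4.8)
The plan is to exhibit inside $G''$ a subgroup whose image under $\psi_2$ is exactly $G'\times\overset{p^2}{\dots}\times G'$, using as building block the branching subgroup $\st_G(1)'$, which by Lemma~\ref{regularbranch} is mapped \emph{isomorphically} onto $G'\times\overset{p}{\dots}\times G'$ by $\psi$. As a preliminary I would check that $G''\leq\st_G(2)$, so that $\psi_2(G'')$ makes sense: for $x,y\in G'$ one has $\varphi_i([x,y])=[\varphi_i(x),\varphi_i(y)]\in[G,G]=G'\leq\st_G(1)$ for every $i$, so every generator $[x,y]$ of $G''$, and hence $G''$ itself, lies in $\st_G(1)$ with $\psi(G'')\leq G'\times\overset{p}{\dots}\times G'\leq\st_G(1)\times\overset{p}{\dots}\times\st_G(1)$.

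The main step is to show that $\gamma_3(G)\times\overset{p}{\dots}\times\gamma_3(G)\leq\psi(G'')$. For this I would work with the subgroup $[\st_G(1)',G']$, which is contained in $[G',G']=G''$ because $\st_G(1)'\leq G'$. Fix $i\in\{1,\dots,p\}$, $u\in G'$ and $g\in G$. By Lemma~\ref{regularbranch} there is $z\in\st_G(1)'$ with $\psi(z)=(1,\dots,1,u,1,\dots,1)$, $u$ in the $i$-th slot, and by Lemma~\ref{subdirect} there is $x\in G'$ with $\varphi_i(x)=g$. Then $[z,x]\in[\st_G(1)',G']\leq G''$, and since all slots of $\psi(z)$ other than the $i$-th are trivial, $\psi([z,x])=(1,\dots,1,[u,g],1,\dots,1)$. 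Letting $u$ range over $G'$ and $g$ over $G$ and taking products, the $i$-th slot fills out all of $[G',G]=\gamma_3(G)$; letting $i$ vary then gives $\gamma_3(G)\times\overset{p}{\dots}\times\gamma_3(G)\leq\psi([\st_G(1)',G'])\leq\psi(G'')$.

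To finish, I would invoke Lemma~\ref{stabgamma}, which gives $\st_G(1)'\leq\gamma_3(G)$, so that $\st_G(1)'\times\overset{p}{\dots}\times\st_G(1)'\leq\psi(G'')$. Since $\psi$ is injective on $\st_G(1)$, which contains both $G''$ and $\st_G(1)'$, and since $\st_G(1)'\times\overset{p}{\dots}\times\st_G(1)'\leq G'\times\overset{p}{\dots}\times G'=\psi(\st_G(1)')$, the subgroup $N\leq\st_G(1)'$ with $\psi(N)=\st_G(1)'\times\overset{p}{\dots}\times\st_G(1)'$ is forced to lie in $G''$, because its $\psi$-image lies in $\psi(G'')$. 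Every element of $N$ has all its first-level sections in $\st_G(1)'\leq\st_G(1)$, so $N\leq\st_G(2)$, and applying $\psi$ in each coordinate and using Lemma~\ref{regularbranch} once more,
$$\psi_2(N)=\psi(\st_G(1)')\times\overset{p}{\dots}\times\psi(\st_G(1)')=G'\times\overset{p^2}{\dots}\times G',$$
whence $\psi_2(G'')\geq\psi_2(N)=G'\times\overset{p^2}{\dots}\times G'$.

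The delicate point is the second step: $G''$ itself does not behave well under $\psi$, so the idea is to replace it by the rigid subgroup $[\st_G(1)',G']$, which already fills each first-level coordinate with $\gamma_3(G)$, and then to use $\st_G(1)'\leq\gamma_3(G)$ to upgrade this to $\st_G(1)'$ in each coordinate; after one further descent through the branching isomorphism $\psi(\st_G(1)')=G'\times\overset{p}{\dots}\times G'$ this becomes exactly $G'$ in each level-$2$ coordinate. It is precisely here that the hypothesis $r\geq 2$ enters, through Lemma~\ref{regularbranch}, and where the arguments available for $r=1$ in \cite{AGU} no longer suffice.
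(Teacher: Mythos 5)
Your proof is correct and follows essentially the same route as the paper: first show $\gamma_3(G)\times\overset{p}{\dots}\times\gamma_3(G)\leq\psi(G'')$ by commutating elements of $\st_G(1)'$ supported in a single coordinate (Lemma~\ref{regularbranch}) against elements of $G'$ with prescribed sections (Lemma~\ref{subdirect}), then upgrade via $\st_G(1)'\leq\gamma_3(G)$ (Lemma~\ref{stabgamma}) and apply the branching isomorphism once more. Your step filling each slot with all of $[G',G]$ directly, rather than with $[a,h]$ and $[b_i,h]$ as in the paper, is only a minor (and if anything slightly cleaner) variation.
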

\begin{proof}
By Lemma \ref{subdirect} we know that there exist $x,y_i\in G'$ such that $\psi(x)=(a,*,\dots,*)$ and $\psi(y_i)=(b_i,*,\dots,*)$ for each $i\in\{1,\dots,r\}$ (where $*$ denotes unknown, unimportant elements).
 On the other hand, by Lemma \ref{regularbranch}, for each $h\in G'$ there is some $g\in G'$ such that $\psi(g)=(h,1,\dots,1)$. Thus $\psi([x,g])=([a,h],1,\dots,1)$ and $\psi([y_i,g])=([b_i,h],1,\dots,1)$ for $i=1,\dots,r$. 
 Now, $[x,g],[y_i,g]\in G''$ implies that $$\psi(G'')\geq \gamma_3(G)\times\overset{p}{\dots}\times\gamma_3(G).$$
Finally, Lemma \ref{stabgamma} and another application of Lemma \ref{regularbranch} yield the result.
\end{proof}

\section{Proof of the main Theorem}

Theorem \ref{thm:CSP} follows from Theorems A and B in \cite{AGU} (the GGS-group case) and from the result in this section.

Let us first establish some notation.
For all $n\in \mathbb{N}$, set 
$$G_n=\frac{G}{\st_G(n)}, \qquad \overline{G}_n=\frac{G_n}{G_n'}, \qquad  \widehat{G}_n=\frac{G_n}{\st_{G_n}(1)'},$$
and write $G_0=1$. Observe that in the same way in which $\psi:\st_G(1)\rightarrow G\times\overset{p}{\dots}\times G$  holds, we also have $\psi_{(n)}:\st_{G_n}(1)\rightarrow G_{n-1}\times\overset{p}{\dots}\times G_{n-1}$. Denoting by $\pi_n$ the projection from $G$ to $G_n$, the following diagram commutes:

%

$$
\begin{tikzcd}
 \st_G(1) \arrow{r}{\psi} \arrow{d}{\pi_n} & G\times\overset{p}{\dots}\times G \arrow{d}{\pi_{n-1}\times\overset{p}{\dots}\times\pi_{n-1}} \\
 \st_{G_n}(1) \arrow{r}{\psi_{(n)}}  & G_{n-1}\times\overset{p}{\dots}\times G_{n-1}
\end{tikzcd}
$$

Moreover, since $\psi:\st_G(1)'\longrightarrow G'\times\overset{p}{\dots}\times G'$ is an isomorphism, the map $$\widehat{\psi}_{(n)}:\frac{\st_{G_n}(1)}{\st_{G_n}(1)'}\longrightarrow \overline{G}_{n-1}\times\overset{p}{\dots}\times\overline{G}_{n-1}$$ is well defined. 
\begin{proposition}\label{stab}
Let $G=\langle a,b_1,\dots,b_r\rangle$ be a multi-GGS-group.
 Then $G'\geq \st_G(r+1)$.
\end{proposition}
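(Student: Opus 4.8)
The plan is to show $G'\geq \st_G(r+1)$ by induction on the structure of the quotients $\widehat{G}_n = G_n/\st_{G_n}(1)'$, tracking how the abelianization behaves under passing to the first level. The key observation is that $G/G' \cong C_p^{r+1}$ by Lemma \ref{multi-lemma}(iii), so $G'$ has index $p^{r+1}$, and it suffices to prove that $G/\st_G(r+1)$ has a commutator subgroup of index exactly $p^{r+1}$, i.e.\ that $\overline{G}_{r+1} = G_{r+1}/G_{r+1}'$ has order $p^{r+1}$. Since $G$ surjects onto $G_{r+1}$, we always have $|\overline{G}_{r+1}| \leq p^{r+1}$, so the content is the lower bound — equivalently, that the abelianization does not grow as we truncate, which is really a statement that $\st_G(r+1)$ already lies in $G'$.

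First I would set up the recursion using the commuting diagram and the well-defined map $\widehat{\psi}_{(n)}\colon \st_{G_n}(1)/\st_{G_n}(1)' \to \overline{G}_{n-1}\times\overset{p}{\dots}\times\overline{G}_{n-1}$. The idea is to bound $|\widehat{G}_n|$ in terms of $|\widehat{G}_{n-1}|$: we have the extension $1 \to \st_{G_n}(1)/\st_{G_n}(1)' \to \widehat{G}_n \to G_n/\st_{G_n}(1) \to 1$, where the quotient $G_n/\st_{G_n}(1) \cong C_p$ is generated by the image of $a$, and the kernel embeds via $\widehat{\psi}_{(n)}$ into $\overline{G}_{n-1}^{\,p}$. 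Then I would show that the image of $\widehat\psi_{(n)}$ lands in a controlled subgroup — intuitively, conjugation by $a$ cyclically permutes the $p$ coordinates, so the kernel, being the abelianization of $\st_{G_n}(1)$, forces the $p$ coordinates to be ``linked'' modulo commutators, cutting the rank down. The upshot I would aim for is that $|\widehat{G}_n|$ stabilizes after finitely many steps, and more precisely that $\overline{G}_n$ stabilizes to $C_p^{r+1}$ once $n \geq r+1$, using that the $r$ directed generators $b_1,\dots,b_r$ together with $a$ are exactly what is needed, and that each additional level can ``absorb'' at most the contribution of one more generator's worth of new relations until all $r+1$ are accounted for.

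Concretely, the mechanism is: $a$ has order $p$ in every $G_n$ and contributes one $C_p$ factor to $\overline{G}_n$; each $b_i$ contributes (at most) one more; and the relations coming from $\psi$ — namely that $\psi(b_i) = (a^{e_{i,1}},\dots,a^{e_{i,p-1}},b_i)$ — mean that at level $n$ the generator $b_i$ is determined modulo $\st_{G_n}(1)'$ by data at level $n-1$, so no genuinely new abelian quotient appears once we have gone down $r+1$ times. I would make this precise by showing $\overline{G}_{n}\cong\overline{G}_{n+1}$ for $n\geq r+1$ (hence $\overline{G}_{r+1}\cong\overline{G}_{\infty}=G/G'\cong C_p^{r+1}$ forces $\st_G(r+1)\leq G'$), or alternatively by directly exhibiting that $\st_G(r+1)$ is generated by elements visibly in $G'$: an element of $\st_G(r+1)$ restricted to any vertex of level $r$ is an element whose portrait, read off through $\psi$ iterated $r$ times, is built from the $b_i$ only via the section relations, and unwinding this via Lemma \ref{regularbranch} and Lemma \ref{multi-lemma}(ii) places it in $\gamma_3$-type subgroups contained in $G'$.

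The main obstacle I expect is controlling the image of $\widehat{\psi}_{(n)}$ precisely enough: one needs to know not just that $\st_{G_n}(1)/\st_{G_n}(1)'$ embeds into a product of $\overline{G}_{n-1}$'s, but exactly which subgroup it hits, and in particular how the $C_p$ from $a$ interacts with the diagonal/cyclic-shift action on the $p$ factors. This is where the hypothesis on linear independence of the defining vectors $\e_1,\dots,\e_r$ — and the special role of the constant-vector case $\mathcal{G}$ — is likely to enter, since for $\mathcal{G}$ the relevant rank count degenerates. A secondary subtlety is the base case and making sure the induction is genuinely stabilizing (that the quotients $\widehat G_n$ do not keep growing), which I would handle by a dimension/order count: $\log_p|\widehat G_n| \leq 1 + \log_p|\overline G_{n-1}|^{?}$ refined to show the sequence $\log_p|\overline G_n|$ is non-decreasing, bounded by $r+1$, and equals $r+1$ already at $n=r+1$.
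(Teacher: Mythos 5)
Your reduction is correct and matches the paper's: since $\overline{G}_{r+1}=G/(G'\st_G(r+1))$ is a quotient of $G/G'\cong C_p^{r+1}$, it suffices to prove the lower bound $d(\overline{G}_{r+1})\geq r+1$, and the natural vehicle is an induction through the quotients $\widehat{G}_n$ using $\widehat{\psi}_{(n)}$. But the proposal stops exactly where the proof has to start. Every concrete mechanism you offer points in the trivial direction: ``each $b_i$ contributes at most one more $C_p$'', ``no genuinely new abelian quotient appears'', ``cutting the rank down'' are all \emph{upper} bounds on $d(\overline{G}_n)$, whereas the proposition is equivalent to a \emph{lower} bound. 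You correctly flag that the linear independence of $\e_1,\dots,\e_r$ must enter when ``controlling the image of $\widehat{\psi}_{(n)}$'', but you never say how, and that is the entire content of the proposition. The paper's argument at this point is: assume inductively that $\overline{a},\overline{b}_1,\dots,\overline{b}_{n-1}$ are independent in $\overline{G}_n$ and suppose for contradiction that $d(\widehat{G}_{n+1})\leq n$; by the Burnside basis theorem one may take $\widehat{a},\widehat{b}_1,\dots,\widehat{b}_{n-1}$ as generators, so $\widehat{b}_n$ is a product of $\widehat{a}^k$-conjugates of $\widehat{b}_1,\dots,\widehat{b}_{n-1}$ modulo $\st_{G_{n+1}}(1)'$. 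Applying $\widehat{\psi}_{(n)}$ and comparing coordinates, the induction hypothesis forces all conjugates with $k\neq 0$ to occur trivially (otherwise some coordinate other than the last would contain a nontrivial $\overline{b}_j$, while the corresponding coordinate of $\widehat{\psi}_{(n)}(\widehat{b}_n)$ is a pure power of $\overline{a}$), and the surviving relation reads $\e_n=\sum_j i_{j,0}\e_j$ in $\F_p^{p-1}$, contradicting the linear independence of the defining vectors. Without this (or an equivalent) computation, nothing in your outline rules out $d(\overline{G}_n)$ freezing at $2$ for all $n$.

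A second, independent problem is the shortcut ``$\overline{G}_{r+1}\cong\overline{G}_{\infty}=G/G'$''. The inverse limit of the $\overline{G}_n$ is $G/K$ with $K=\bigcap_n G'\st_G(n)$, the closure of $G'$ in the congruence topology, and the identity $K=G'$ is equivalent to $\st_G(m)\leq G'$ for some $m$ --- essentially the statement being proved. So that route is circular and cannot substitute for the coordinate argument above.
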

\begin{proof}
We will prove by induction on $n\in \N$ that $d(\overline{G}_n)\geq n$ for $n=2,\dots,r+1$. This implies in particular that $\overline{G}_{r+1}$ is generated by $r+1$ elements, and then $|\overline{G}_{r+1}|=| G: G'|$, which implies that $G'=G'\st_G(r+1)$ and the result follows.

Observe that $d(G_n)=d(\overline{G}_n)=d(\widehat{G}_n)$, because $G_n^p\leq G_n'$ and then $\Phi(G_n)=G_n'$. Since $G_n'$ and $\st_{G_n}(1)'$ are contained in $\Phi(G_n)$, the minimum number of generators does not change.

The case $n=2$ is obvious because if $G_2$ is generated by one element, then $\st_G(1)=\st_G(2)$ and this cannot happen.
 Let us suppose the statement is true for $n\leq r$, that is $d(\overline{G}_n)\geq n$.
  Since $\overline{G}_n$ is elementary abelian, and it is generated by the projections of the generators of $G$, we can choose a basis and we may assume that $\overline{G}_n=\langle \overline{a},\overline{b}_1,\dots,\overline{b}_{n-1},\dots\rangle$ where the first $n$ generators are linearly independent in $\overline{G}_n$.
   We want to prove the case $n+1$. 
   Suppose for a contradiction that $\widehat{G}_{n+1}$ can be generated by $n$ elements.
In order to generate $\widehat{G}_{n+1}$, we need some element congruent to $\widehat{a}$ modulo $\st_{G_{n+1}}(1)'$.
 On the other hand, by the Burnside Basis Theorem, since 
$\st_{G_{n+1}}(1)/G_{n+1}'$ has rank at most $n-1$ because we assumed that $d(\widehat{G}_{n+1})\leq n$, we can choose a basis $\widehat{b}_1,\dots,\widehat{b}_{n-1}$ of $\st_{G_{n+1}}(1)/\st_{G_{n+1}}(1)'$ (these elements are linearly independent because they map onto $\overline{b}_1,\dots,\overline{b}_{n-1}$, respectively, which are assumed to be linearly independent). 
We may thus suppose that $\widehat{G}_{n+1}=\langle \widehat{a},\widehat{b}_1,\dots,\widehat{b}_{n-1}\rangle$.
%
%
%
%
Then
$$	\widehat{b}_n=\widehat{b}_1^{i_{1,0}}(\widehat{b}_1^{\widehat{a}})^{i_{1,1}}\dots(\widehat{b}_1^{\widehat{a}^{p-1}})^{i_{1,p-1}}\dots
 \widehat{b}_{n-1}^{i_{n-1,0}}(\widehat{b}_{n-1}^{\widehat{a}})^{i_{n-1,1}}\dots(\widehat{b}_{n-1}^{\widehat{a}^{p-1}})^{i_{n-1,p-1}}$$
with $i_{j,k}\in\F_p$.
But then the images under $\widehat{\psi}_{(n)}$ of the element on the left-hand side and right-hand side must be equal in $\overline{G}_n$. Since $\widehat{\psi}_{(n)}(\widehat{b}_n)=(\overline{a}^{e_{n,1}},\dots,\overline{a}^{e_{n,p-1}},\overline{b}_n$) we are forced to have $i_{j,k}=0$ for $k\neq 0$.
 This means that $e_{n}=i_{0,1}e_1+\dots+i_{0,n}e_{n-1}$, which is impossible, because the defining vectors are linearly independent.
  Thus, $d(\widehat{G}_{n+1})\geq n+1$ and the theorem follows by induction.
\end{proof}

\begin{cor}\label{cor}
	Let $G$ be as in Proposition \ref{stab}. 
	Then $G$ has the congruence subgroup property. 
\end{cor}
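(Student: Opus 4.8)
The plan is to assemble Proposition \ref{stab}, Lemma \ref{psi2} and the reduction recorded just after Lemma \ref{regularbranch}. Throughout I would work under the hypothesis $r\geq 2$: when $r=1$ the group $G$ is a GGS-group, in which case either $G=\mathcal{G}$ (excluded) or $G$ is defined by a non-constant vector, a situation already settled by Theorems A and B of \cite{AGU}. Note that $r\geq 2$ forces $G\neq\mathcal{G}$, so Lemma \ref{multi-lemma}(ii) and Lemmas \ref{regularbranch}, \ref{subdirect} and \ref{psi2} all apply. By Lemma \ref{regularbranch}, $G$ is regular branch over $G'$; hence, by Proposition 2.4 of \cite{AGU} (as quoted after Lemma \ref{regularbranch}), it is enough to exhibit some $m\in\N$ with $\st_G(m)\leq G''$.

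To find such an $m$, I would first combine Lemma \ref{psi2} with Proposition \ref{stab}. The former gives $\psi_2(G'')\geq G'\times\overset{p^2}{\dots}\times G'$, and the latter gives $\st_G(r+1)\leq G'$, so that $\psi_2(G'')\geq \st_G(r+1)\times\overset{p^2}{\dots}\times\st_G(r+1)$. On the other hand, the elementary containment $\psi(\st_G(n+1))\leq \st_G(n)\times\overset{p}{\dots}\times\st_G(n)$, applied twice, yields $\psi_2(\st_G(r+3))\leq \st_G(r+1)\times\overset{p^2}{\dots}\times\st_G(r+1)$. Chaining these two facts gives $\psi_2(\st_G(r+3))\leq \psi_2(G'')$.

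Finally I would transfer this inclusion back along $\psi_2$. Since $G'\leq\st_G(1)$ we have $G''\leq\st_G(1)'$, and $\psi(\st_G(1)')=G'\times\overset{p}{\dots}\times G'\leq\st_G(1)\times\overset{p}{\dots}\times\st_G(1)$ shows $\st_G(1)'\leq\st_G(2)$; thus both $G''$ and $\st_G(r+3)$ lie in $\st_G(2)$, on which $\psi_2$ is injective. Applying $\psi_2^{-1}$ to $\psi_2(\st_G(r+3))\leq\psi_2(G'')$ therefore gives $\st_G(r+3)\leq G''$, and with the reduction above this proves that $G$ has the congruence subgroup property (with $m=r+3$).

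I do not anticipate any genuine obstacle here: the real content has already been supplied by Proposition \ref{stab} (controlling the ranks of the quotients $\overline{G}_n$) and by the branch-structure lemmas. The only points demanding care are the bookkeeping of the level indices and the observation — crucial for the last step — that the inclusion of images may be pulled back to $G$ precisely because all the subgroups involved sit inside $\st_G(2)$, where $\psi_2$ is injective.
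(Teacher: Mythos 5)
Your proposal is correct and follows essentially the same route as the paper: reduce via Proposition 2.4 of \cite{AGU} and Lemma \ref{regularbranch} to showing $G''$ contains a level stabilizer, then chain $\psi_2(G'')\geq G'\times\overset{p^2}{\dots}\times G'\geq \st_G(r+1)\times\overset{p^2}{\dots}\times\st_G(r+1)\geq\psi_2(\st_G(r+3))$. The only additions are points the paper leaves implicit, namely the explicit justification that the inclusion can be pulled back through $\psi_2$ and the restriction to $r\geq 2$ with the $r=1$ case deferred to \cite{AGU}; both are correct and welcome clarifications rather than deviations.
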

As remarked previously, it suffices to show, by Proposition 2.4 of \cite{AGU} and Lemma \ref{regularbranch}, that $G''$ contains some level stabilizer. 
Lemma \ref{psi2} and Proposition \ref{stab} yield that
 $$\psi_2(G'')\geq \st_G(r+1)\times\overset{p^2}{\dots}\times \st_G(r+1)\geq \psi_2(\st_G(r+3)).$$
Thus $G''\geq \st_G(r+3)$.

\end{document}